\DeclareSymbolFont{cyrletters}{OT2}{wncyr}{m}{n}
\DeclareMathSymbol{\Sha}{\mathalpha}{cyrletters}{"58}
\newcommand{\bQ}{{\mathbb{Q}}}
\newcommand{\bR}{{\mathbb{R}}}
\newcommand{\bZ}{{\mathbb{Z}}}
  \newcommand{\R}{{\mathcal{R}}}
\newcommand{\GL}{\operatorname{GL}}
\newcommand{\Aut}{\operatorname{Aut}}
\newcommand{\ep}{\varepsilon}
\newcommand{\upchi}{{\raise.35ex\hbox{$\chi$}}}
\newtheorem{theorem}{Theorem}[section]
\newtheorem{corollary}[theorem]{Corollary}
\newtheorem{lemma}[theorem]{Lemma}
\theoremstyle{definition}
\numberwithin{equation}{section}
\begin{document}

\title{On the representation of integers by binary quadratic forms}

\author{Stanley Yao Xiao}
\address{Mathematical Institute \\
University of Oxford \\
Oxford \\  OX2 6GG \\
United Kingdom}
\email{stanley.xiao@maths.ox.ac.uk}
\indent


\begin{abstract} In this note we show that for a given irreducible binary quadratic form $f(x,y)$ with integer coefficients, whenever we have $f(x,y) = f(u,v)$ for integers $x,y,u,v$, there exists a rational automorphism of $f$ which sends $(x,y)$ to $(u,v)$. 
\end{abstract}

\maketitle


\section{Introduction} 

Let $F$ be a binary form with integer coefficients, non-zero discriminant, and degree $d \geq 2$. We say that an integer $h$ is \emph{representable} by $F$ if there exist integers $x,y$ such that $F(x,y) = h$. It is an old question, dating back to Diophantus in the case of sums of two squares, to determine which integers $h$ are representable by a given form $F$. While an exact description (for example, in terms of congruence conditions) remain elusive for all but the simplest of cases, asymptotic results have now been established. Define
\begin{equation} \label{RFZ} \R_F(Z) = \{h \in \bZ : h \text{ is representable by } F, |h| \leq Z\} \end{equation}
and $R_F(Z) = \# \R_F(Z)$. Landau proved in 1908 that there exists a positive number $C_1$ such that
\begin{equation} \label{2S} R_{x^2 + y^2}(Z) \sim \frac{C_1 Z}{\sqrt{\log Z}},\end{equation}
and shortly after the result was established for all positive definite binary quadratic forms. \\ \\
In general, one expects that for a binary form $F$ with degree $d \geq 3$, integer coefficients, and non-zero discriminant, that there exists a positive number $C(F)$ such that the asymptotic relation
\begin{equation} \label{main eq} R_F(Z) \sim C(F) Z^{\frac{2}{d}} \end{equation} 
holds. It would take over half a century before the analogous asymptotic formula would be established for non-abelian cubic forms, which was achieved by Hooley. He proved in \cite{Hoo1} that (\ref{main eq}) holds whenever $F$ is a non-abelian binary cubic form. In subsequent works \cite{Hoo2} \cite{Hoo3}, he established (\ref{main eq}) for bi-quadratic binary quartic forms and abelian binary cubic forms, respectively. In \cite{SX}, Stewart and Xiao established (\ref{main eq}) for all integral binary forms of degree $d \geq 3$ and non-zero discriminant. \\ \\
For $F$ a binary form of degree $d \geq 2$, define
\[\Aut_\bQ F = \left\{T = \left(\begin{smallmatrix} t_1 & t_2 \\ t_3 & t_4 \end{smallmatrix} \right) \in \GL_2(\bQ) : F(x,y) = F(t_1 x + t_2 y, t_3 x + t_4 y)  \right \}.\]
The absence of the logarithmic term in (\ref{main eq}) as opposed to (\ref{2S}) is accounted for by the fact that for a binary form $F$ of degree at least $3$, $\Aut_\bQ F$ is always finite. When $F$ is a quadratic form, the group $\Aut_\bQ F$ is infinite. \\ \\
We say a representable integer $h$ is \emph{essentially represented} if whenever $(x,y), (u,v) \in \bZ^2$ are such that $F(x,y) = F(u,v) = h$, there exists $T \in \Aut_\bQ F$ such that $\binom{x}{y} = T \binom{u}{v}$. Note that if $F(x,y) = h$ has a unique solution, then $h$ is essentially represented since $\left(\begin{smallmatrix}1 & 0 \\ 0 & 1 \end{smallmatrix} \right) \in \Aut_\bQ F$. Put
\[\R_F^{(1)}(Z) = \{h \in \R_F(Z) : h \text{ is essentially represented}\}\]
and $R_F^{(1)}(Z) = \# \R_F^{(1)}(Z)$. In the $d \geq 3$ case Heath-Brown showed in \cite{HB1} that there exists $\eta_d > 0$, depending only on the degree $d$, such that for all $\ep > 0$
\begin{equation} \label{essential} R_F(Z) = R_F^{(1)}(Z)\left(1 + O_\ep \left(Z^{-\eta_d + \ep} \right) \right).\end{equation}
This is essentially reduces the question of enumerating $\R_F(Z)$ to that of $\R_F^{(1)}(Z)$, which is far simpler, and the key to our success in \cite{SX}. Heath-Brown's theorem does not address the case of quadratic forms, which we do so now: 

\begin{theorem} \label{essential rep thm} Let $f$ be an irreducible and primitive binary quadratic form. Then every integer $h$ representable by $f$ is essentially represented. 
\end{theorem}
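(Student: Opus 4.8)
The plan is to identify $\Aut_\bQ f$ with the orthogonal group of $f$ over $\bQ$ and, for any two representations of the same nonzero value, to write down an explicit rational automorphism carrying one to the other, built from the arithmetic of the quadratic field attached to $f$. Write $f(x,y) = ax^2 + bxy + cy^2$ with discriminant $D = b^2 - 4ac$; since $f$ is irreducible, $D$ is not a perfect square (and $a \neq 0$). First I would dispose of the case $h = 0$: because $D$ is not a square, the roots $\theta,\theta'$ of $f(t,1)=0$ are irrational, so $f(x,y)=0$ forces $(x,y)=(0,0)$. Thus $0$ admits a unique representation and is essentially represented via the identity matrix, and we may assume $h \neq 0$.

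Next I would introduce the quadratic field $K = \bQ(\sqrt{D})$ together with the two roots $\theta,\theta'$ of $f(t,1)=0$, so that $\theta + \theta' = -b/a$ and $\theta\theta' = c/a$. The governing identity is
\[ f(x,y) = a(x - \theta y)(x - \theta' y) = a\, N_{K/\bQ}(x - \theta y), \]
where $N_{K/\bQ}$ is the field norm. Under the $\bQ$-linear isomorphism $\iota \colon \bQ^2 \to K$, $\binom{x}{y} \mapsto x - \theta y$ (an isomorphism because $\{1,\theta\}$ is a $\bQ$-basis of $K$), the relation $f(x,y) = f(u,v)$ becomes $N_{K/\bQ}(\iota(x,y)) = N_{K/\bQ}(\iota(u,v))$.

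Now suppose $f(x,y) = f(u,v) = h \neq 0$. Then neither $x - \theta y$ nor $u - \theta v$ vanishes, since a zero would force the corresponding vector to be $(0,0)$ and hence $h = 0$. Therefore I may form $\gamma = (x - \theta y)/(u - \theta v) \in K^\times$, which satisfies $N_{K/\bQ}(\gamma) = (h/a)/(h/a) = 1$. Multiplication by $\gamma$ is a $\bQ$-linear automorphism of $K$, and transporting it through $\iota$ yields a matrix $T = \iota^{-1} \circ (\alpha \mapsto \gamma\alpha) \circ \iota \in \GL_2(\bQ)$, with $\det T = N_{K/\bQ}(\gamma) = 1$. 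Since $N_{K/\bQ}(\gamma\alpha) = N_{K/\bQ}(\alpha)$, the map $T$ preserves $N_{K/\bQ}\circ\iota$ and hence preserves $f$, so $T \in \Aut_\bQ f$. Finally, $\gamma\,(u - \theta v) = x - \theta y$ translates under $\iota$ into $T\binom{u}{v} = \binom{x}{y}$, which is exactly what essential representation requires.

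I do not anticipate a serious obstacle: the construction is a concrete rank-two instance of Witt's extension theorem, the point being that over $\bQ$ the nondegenerate anisotropic form $f$ has special orthogonal group equal to the norm-one torus of $K$, which acts (simply) transitively on each nonempty level set $\{w : f(w) = h\}$ with $h \neq 0$. One could instead simply cite Witt's theorem to produce an isometry $T \in \mathrm{O}(f)(\bQ) = \Aut_\bQ f$. The only points needing care are the verification that $T$ has rational entries—immediate, as multiplication by $\gamma \in K$ is $\bQ$-linear and $\det T = N_{K/\bQ}(\gamma) = 1 \neq 0$—and the separate treatment of $h = 0$. It is worth flagging that primitivity plays no essential role in this rational statement, since scaling $f$ by a nonzero rational leaves $\Aut_\bQ f$ unchanged; only irreducibility, used to guarantee $\theta \notin \bQ$, is genuinely required.
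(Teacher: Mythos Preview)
Your proof is correct and takes a genuinely different route from the paper. The paper works case by case (positive definite versus indefinite), parametrizes $SO_f(\bQ)$ by the rational points on the auxiliary conic $x^2 \pm \delta y^2 = 1$ (Lemmas~\ref{ellipse lemma} and~\ref{hyperbola lemma}), and then for each pair $(x,y),(u,v)$ with $f(x,y)=f(u,v)=h$ writes down explicit integers $m,n$ and verifies by direct expansion that the resulting matrix $T_f(m,n)$ carries $(x,y)$ to $(u,v)$. Your argument replaces this computation with the norm-form interpretation $f(x,y)=a\,N_{K/\bQ}(x-\theta y)$ in $K=\bQ(\sqrt{D})$: the element $\gamma=(x-\theta y)/(u-\theta v)$ has norm $1$, and multiplication by $\gamma$, transported through the basis $\{1,\theta\}$, is a rational matrix of determinant $N_{K/\bQ}(\gamma)=1$ preserving $f$ and sending $(u,v)$ to $(x,y)$. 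The two constructions produce the same automorphism---the paper's conic $E_\delta$ is precisely the norm-one torus of $K$ written in coordinates---but your approach handles both signatures uniformly, avoids all case analysis and brute-force verification, and makes transparent why the construction works (it is Witt extension in rank two, realized via the simply transitive action of the norm-one torus on each nonzero level set). The paper's approach, by contrast, yields completely explicit formulas for $T$ in terms of the coefficients of $f$ and the data $x,y,u,v$. Your observation that primitivity is not needed is also correct.
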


Consider the quadric surface $X_f$ defined by
\[X_f : f(x_1, x_2) = f(x_3, x_4).\]

In \cite{HB1}, Heath-Brown showed that lines on $X_f$ correspond to automorphisms of $f$, possibly defined over a larger field. His result and our Theorem \ref{essential rep thm} has the following consequence:

\begin{corollary} Let $X_f$ be the surfaced defined by $f(x_1, x_2) = f(x_3, x_4)$, with $f$ a binary quadratic form with integer coefficients and non-zero discriminant. Then every point in $X_f(\bQ)$ lies on a rational line contained in $X_f$. 
\end{corollary}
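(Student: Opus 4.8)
The plan is to read off the Corollary from Theorem~\ref{essential rep thm} and the rational case of Heath--Brown's dictionary \cite{HB1} between lines on $X_f$ and automorphisms of $f$. First I would reduce to $f$ primitive: writing $f = c\,g$ with $c$ the content and $g$ primitive, the equation $f(x_1,x_2)=f(x_3,x_4)$ is equivalent to $g(x_1,x_2)=g(x_3,x_4)$, so $X_f=X_g$ and $\Aut_\bQ f=\Aut_\bQ g$, and I may assume $f$ primitive. Next I would record the rational direction of the correspondence explicitly: to each $T=\left(\begin{smallmatrix} t_1 & t_2 \\ t_3 & t_4\end{smallmatrix}\right)\in\Aut_\bQ f$ I attach
\[L_T=\left\{[\,t_1u+t_2v : t_3u+t_4v : u : v\,] : [u:v]\in\bP^1\right\},\]
which is a line defined over $\bQ$ (its last two coordinates parametrize it), lies on $X_f$ because $f\!\left(T\binom{u}{v}\right)=f(u,v)$, and which I will use to hit a prescribed rational point.

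Because Theorem~\ref{essential rep thm} requires $f$ irreducible, I would then split into two cases. Suppose first that $f$ is irreducible. Given $P\in X_f(\bQ)$, scale its coordinates to a primitive integer vector $(a_1,a_2,a_3,a_4)$ and set $h=f(a_1,a_2)=f(a_3,a_4)\in\bZ$. Since an irreducible binary quadratic form has no nontrivial rational zero, $h\neq 0$ and both $(a_1,a_2)$ and $(a_3,a_4)$ are genuine representations of $h$. Theorem~\ref{essential rep thm} then supplies $T\in\Aut_\bQ f$ with $\binom{a_1}{a_2}=T\binom{a_3}{a_4}$, and taking $[u:v]=[a_3:a_4]$ shows $P\in L_T$. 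Thus $P$ lies on the rational line $L_T\subset X_f$, as desired.

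Now suppose $f$ is reducible. Having nonzero discriminant, it factors over $\bQ$ into two distinct rational linear forms, so some $M\in\GL_2(\bQ)$ satisfies $f=g\circ M$ with $g=xy$; the block map $\mathrm{diag}(M,M)$ is then a $\bQ$-linear automorphism of $\bP^3$ carrying $X_f$ isomorphically to $X_{xy}$ and rational lines to rational lines, reducing us to $f=xy$. There $X_{xy}$ is the smooth determinantal quadric $\det\left(\begin{smallmatrix} x_1 & x_3 \\ x_4 & x_2\end{smallmatrix}\right)=0$, whose two rulings are defined over $\bQ$; factoring the rank-one matrix attached to a rational point as an outer product $\binom{\alpha}{\beta}(\gamma,\delta)$ with $\alpha,\beta,\gamma,\delta\in\bQ$ exhibits the two rational lines through that point. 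This case in particular disposes of the rational points with $h=0$, which the automorphism argument cannot reach.

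I expect the only real obstacle to be organizational rather than substantive: Theorem~\ref{essential rep thm} supplies all the arithmetic content for irreducible $f$, and the remaining work is to make the passage $T\mapsto L_T$ rigorous (rationality and nondegeneracy of $L_T$) and to treat the reducible forms, which lie outside the hypotheses of the theorem, via the classical rulings of $x_1x_2=x_3x_4$. The delicate boundary points are exactly those on the locus $h=0$, and these occur only in the reducible case, where the ruling argument handles them directly.
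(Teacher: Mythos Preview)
Your proposal is correct and follows the route the paper itself indicates: the paper does not give a separate proof of the Corollary, only the sentence preceding it, which says it is a consequence of Theorem~\ref{essential rep thm} together with Heath--Brown's identification of lines on $X_f$ with automorphisms of $f$. Your argument makes that sentence precise via the explicit map $T\mapsto L_T$ and the clearing-of-denominators step, exactly as intended.

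Where you go beyond the paper is in noticing a genuine gap in the statement-versus-hypothesis match: the Corollary is stated for all $f$ with nonzero discriminant, whereas Theorem~\ref{essential rep thm} assumes $f$ irreducible (and primitive). Your reduction to primitive $f$ is harmless, but your separate treatment of the reducible case via the $\GL_2(\bQ)$-equivalence with $xy$ and the explicit rational rulings of $x_1x_2=x_3x_4$ is actually necessary to cover the Corollary as written; the paper's one-line derivation does not address this, and the $h=0$ locus you single out is precisely where the automorphism argument breaks down. So your version is not a different approach so much as a completed one.
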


It has been pointed out to the author that Theorem \ref{essential rep thm} essentially follows from Witt's theorem (see Theorem 42.16 in \cite{JWS}). Nevertheless, we feel that this result is of independent interest to number theorists and does not appear to be well-known. 

\section{Preliminary lemmas} 

The strategy is very simple: for a given pair of integers $(x,y), (u,v)$ such that $f(x,y) = f(u,v)$, we exhibit an explicit automorphism of $f$ which sends $(x,y)$ to $(u,v)$. In fact, we will draw such an automorphism from a proper subgroup of $\Aut_\bQ f$. Put
\[f(x,y) = f_2 x^2 + f_1 xy + f_0 y^2,\]
and put
\[\delta = \left \lvert \frac{f_1^2 - 4 f_2 f_0}{4} \right \rvert.\]
We shall first characterize the automorphism group $\Aut_\bQ f$. It turns out that this depends on whether $f$ is positive definite or not. 

\subsection{Positive definite binary quadratic forms} In this case, we shall pick our $T$ from the group $\Aut_\bQ f \cap SO_f(\bR)$, where
\[SO_f(\bR) = \{T \in \GL_2(\bR) : \det T = 1, f_T = f\}.\]
The group $SO_f(\bR)$ is conjugate to the special orthogonal group $SO_2(\bR)$ and its elements look like 

\[T_f(t) = \begin{pmatrix} \cos t + \dfrac{f_1 \sin t}{2\sqrt{\delta}} & \dfrac{f_0 \sin t}{\sqrt{\delta}} \\ \\ \dfrac{-f_2 \sin t}{\sqrt{\delta}} & \cos t - \dfrac{f_1 \sin t}{2\sqrt{\delta}} \end{pmatrix}, t \in [0, 2\pi).\]
If we demand that $T_f(t) \in \GL_2(\bQ)$, then it follows that $\cos t \in \bQ$ and $\sqrt{\delta} \sin t \in \bQ$. Put
\[u = \cos t, v = \frac{\sin t}{\sqrt{\delta}}.\]
Then $u,v$ satisfy the equation
\[u^2 + \delta v^2 = 1.\]
Put $E_\delta$ for the curve defined by
\begin{equation} \label{ellipse} E_\delta : x^2 + \delta y^2 = 1.\end{equation}
We then see that there is a bijection between rational points on $E_\delta$ and rational elements $T \in \Aut_\bQ f$. We now characterize the set of rational points on $E_\delta$.

\begin{lemma} \label{ellipse lemma} Let $E_\delta$ be the curve given by (\ref{ellipse}), with $4\delta$ a positive integer. Then the set of rational points on $E_\delta$ is given by the parametrization
\[\left(\frac{\delta p^2 - q^2}{\delta p^2 + q^2}, \frac{2pq}{\delta p^2 + q^2}\right), p, q \in \bZ, q > 0, \gcd(p,q) = 1.\] 
\end{lemma}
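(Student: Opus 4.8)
The plan is to exploit the classical parametrization of rational points on a conic that possesses a rational point, namely stereographic projection from a fixed base point. Since $E_\delta$ passes through the rational point $(1,0)$, I would sweep out the pencil of lines $y = t(x-1)$ of rational slope $t$ through this point and track the second point in which each line meets $E_\delta$. The hypothesis that $4\delta$ is a positive integer guarantees in particular that $\delta \in \bQ_{>0}$, so that $E_\delta$ is a genuine ellipse carrying the rational base point $(1,0)$, and the slopes $t$ range over all of $\bQ$.

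First I would substitute $y = t(x-1)$ into $x^2 + \delta y^2 = 1$, obtaining a quadratic in $x$ one of whose roots is the base point $x = 1$. Factoring out $(x-1)$ leaves a linear equation whose solution gives the second intersection point
\[ x = \frac{\delta t^2 - 1}{\delta t^2 + 1}, \qquad y = \frac{-2t}{\delta t^2 + 1}. \]
Writing the slope in lowest terms as $t = p/q$ with $q > 0$ and $\gcd(p,q) = 1$, and clearing denominators, yields the point $\left(\frac{\delta p^2 - q^2}{\delta p^2 + q^2}, \frac{-2pq}{\delta p^2 + q^2}\right)$. The sign of the second coordinate is immaterial, since replacing $p$ by $-p$ leaves the first coordinate fixed and flips the second; thus the stated family of pairs is exactly recovered, and each such pair is seen to lie on $E_\delta$ by direct verification.

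For the converse I would show that every rational point $(a,b) \in E_\delta(\bQ)$ distinct from the base point arises this way: the line joining $(a,b)$ to $(1,0)$ has a well-defined rational slope $t$, and the displayed formulas invert the construction, so $(a,b)$ is the image of that $t$. Reducing $t$ to lowest terms $p/q$ with $q>0$ then produces the required coprime pair, and one checks that distinct reduced pairs yield distinct points, so that the normalization $\gcd(p,q)=1$, $q>0$ is genuine.

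The point requiring the most care is the bookkeeping at the boundary of the parametrization: the base point $(1,0)$ itself corresponds to the degenerate slope $t = \infty$, equivalently $q = 0$, which is excluded by the convention $q > 0$. One must therefore confirm separately how $(1,0)$ is to be accounted for, and verify that no other rational point is omitted as $p/q$ ranges over all reduced fractions with $q>0$. Beyond this normalization check the argument is the standard conic parametrization and presents no substantive obstacle.
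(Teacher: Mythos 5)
Your proposal is correct and follows essentially the same route as the paper: stereographic projection from the rational point $(1,0)$ through the pencil of lines $y = m(x-1)$ of rational slope, followed by writing the slope in lowest terms $p/q$ with $q>0$, $\gcd(p,q)=1$. You are in fact slightly more careful than the paper, which silently absorbs the sign of the second coordinate (its displayed $y$ should be $-2m/(\delta m^2+1)$) and never addresses your valid observation that the base point $(1,0)$ itself is not captured by the stated parametrization since $q>0$ is required.
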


\begin{proof}Using the fact that $(1,0)$ is a point on the curve $E_\delta$, we use the slope method to find all other rational points. Indeed, the intersection of the line given by
\[y = m(x-1), m \in \bQ\]
and the curve $E_\delta$ is another rational point on $E_\delta$, and all such points arise this way. Substituting, we find that
\[x^2 + \delta (m(x-1))^2 = 1\]
is equivalent to
\[x = \frac{\delta m^2 \pm 1}{\delta m^2 + 1}.\]
The $+$ sign gives $x = 1$, and the $-$ sign gives
\[x = \frac{\delta m^2 - 1}{\delta m^2 + 1}\]
which corresponds to the point
\[(x,y) = \left(\frac{\delta m^2 - 1}{\delta m^2 + 1}, \frac{2m}{\delta m^2 + 1} \right).\]
If we write the slope $m$ as $m = p/q$, where $q > 0$ and $\gcd(p,q) = 1$, then the point can be given as
\[(x,y) = \left(\frac{\delta p^2 - q^2}{\delta p^2 + q^2}, \frac{2pq}{\delta p^2 + q^2} \right),\]
as desired. \end{proof} 

\subsection{Indefinite binary quadratic forms} In this case, the group $SO_f(\bR)$ is no longer connected, and we shall focus on the \emph{principal branch} of $SO_f(\bR)$, which is the branch containing the identity matrix. This branch can be identified as the set of matrices of the form
\[T_f(t) = \begin{pmatrix} \cosh t - \dfrac{f_1 \sinh t}{2 \sqrt{\delta}} & & -\dfrac{f_0 \sinh t}{\sqrt{\delta}} \\ \\ \dfrac{f_2 \sinh t}{\sqrt{\delta}} & & \cosh t + \dfrac{f_1 \sinh t}{2 \sqrt{\delta}} \end{pmatrix}, t \in \bR.\]
Again, if we demand that $T_f(t) \in \GL_2(\bQ)$, then necessarily $\cosh t , \sqrt{\delta} \sinh t \in \bQ$. Put
\[u = \cosh t, v = \frac{\sinh t}{\sqrt{\delta}}.\]
Notice that $(u,v)$ lies on the curve
\begin{equation} \label{hyperbola} E_\delta: x^2 - \delta y^2 = 1. \end{equation}
It is immediate that there is a bijection between the set of rational points $E_\delta(\bQ)$ and elements in $SO_f(\bQ)$. We have the following characterization of the rational points on $E_\delta$:

\begin{lemma} \label{hyperbola lemma} Let $E_\delta$ be the curve given by (\ref{hyperbola}). Then the set of rational points $E_\delta(\bQ)$ are given by the parametrization
\[\left(\frac{\delta p^2 + q^2}{\delta p^2 - q^2}, \frac{2pq}{\delta p^2 - q^2} \right), p,q \in \bZ, q > 0, \gcd(p,q) = 1.\]
\end{lemma}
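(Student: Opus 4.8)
The plan is to mimic the positive-definite case (Lemma~\ref{ellipse lemma}) using the slope method, now applied to the hyperbola $x^2 - \delta y^2 = 1$. First I would observe that $(1,0)$ is a rational point on $E_\delta$, so I can parametrize all other rational points by intersecting $E_\delta$ with the pencil of lines $y = m(x-1)$ through this base point, where $m \in \bQ$. Substituting this into $x^2 - \delta y^2 = 1$ yields the quadratic
\[x^2 - \delta m^2 (x-1)^2 = 1,\]
which factors as $(x-1)$ times a linear factor since $x = 1$ is automatically a root. I would then solve for the second root explicitly.

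Carrying out the algebra, the equation becomes $(1 - \delta m^2) x^2 + 2\delta m^2 x - (\delta m^2 + 1) = 0$, and dividing out the known root $x = 1$ leaves
\[x = \frac{\delta m^2 + 1}{\delta m^2 - 1}.\]
The corresponding $y$-coordinate is $y = m(x - 1) = \dfrac{2m}{\delta m^2 - 1}$. Writing $m = p/q$ with $q > 0$ and $\gcd(p,q) = 1$, clearing denominators turns these into
\[(x,y) = \left(\frac{\delta p^2 + q^2}{\delta p^2 - q^2}, \frac{2pq}{\delta p^2 - q^2}\right),\]
exactly the claimed parametrization. The two directions of the bijection are straightforward: every such $(x,y)$ clearly lies on $E_\delta$ by direct verification, and conversely every rational point other than the base point arises from a unique rational slope $m$.

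The one genuine difference from the elliptic case is that the denominator $\delta p^2 - q^2$ can vanish, which happens precisely when $\delta$ is a perfect square and $q/p = \pm\sqrt{\delta}$; geometrically these are the slopes of the two asymptotic directions, for which the line $y = m(x-1)$ meets the conic at infinity rather than at a second affine point. I would therefore note that such pairs $(p,q)$ are simply excluded from the parametrization (the condition $\gcd(p,q)=1$ together with $\delta p^2 \neq q^2$), and that this accounts for exactly the rational points lost. The main point requiring a word of care is thus confirming that no affine rational point on $E_\delta$ is missed by this exclusion, which follows since any affine rational point distinct from $(1,0)$ determines a finite rational slope through $(1,0)$ and hence a valid pair $(p,q)$. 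Apart from this caveat, the argument is a routine conic parametrization identical in structure to the proof of Lemma~\ref{ellipse lemma}.
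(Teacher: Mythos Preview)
Your proof is correct and follows precisely the approach the paper intends: the paper's own proof simply reads ``Same as Lemma~\ref{ellipse lemma},'' and you have carried out exactly that slope-method argument through the base point $(1,0)$, with the sign changes appropriate to the hyperbola. Your additional remark about the vanishing denominator when $\delta$ is a perfect square is a nice point of care (though in the paper's setting $f$ is irreducible, so this case does not actually arise).
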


\begin{proof} Same as Lemma \ref{ellipse lemma}. 
\end{proof}

\section{Proof of Theorem \ref{essential rep thm}}

We first address the case when $f$ is positive definite. Let $h$ be a representable integer of $f$. If there exists exactly one pair of integers $(x,y)$ such that $f(x,y) = h$, then $h$ is essentially represented. Now suppose there exist distinct representations $(x,y), (u,v)$ of $h$, so that
\begin{equation} \label{rep h} h = f(x,y) = f(u,v).\end{equation}
Put
\[m = 2 f_2 ux + f_1(uy + vx) + 2 f_0 vy - 2h, n = 2\delta(uy - vx)\]
and
\[T_f(m,n) = \frac{1}{\delta m^2 + n^2} \begin{pmatrix} \delta m^2 - n^2 + f_1 mn & 2 f_0 mn \\ -2 f_2 mn & \delta m^2 - n^2 - f_1 mn  \end{pmatrix} \in \Aut_\bQ f.\]
Observe that 
\[(\delta m^2 - n^2 + f_1 mn) x + 2f_0 mn y = hm \delta u\]
and
\[-2f_2 mn x + (\delta m^2 - n^2 - f_1 mn)y = hm \delta v.\]
Moreover, by expanding, we see that
\[\delta m^2 + n^2 = hm \delta.\]
It then follows that
\[T_f(m,n) \binom{x}{y} = \binom{u}{v}.\]

The proof of the theorem when $f$ is indefinite is similar, but we include the full argument for the sake of completeness. Suppose that (\ref{rep h}) holds and put
\[m = 2f_2 ux + f_1(uy + vx) + 2 f_0 vy - 2h, n = 2 \delta(vx - uy).\]
Then the associated $T_f(m,n) \in \Aut_\bQ f$ is given by
\[T_f(m,n) = \frac{1}{\delta m^2 - n^2} \begin{pmatrix} \delta m^2 + n^2 - f_1 mn &  -2 f_0 mn \\ 2 f_2 mn & \delta m^2 + n^2 + f_1 mn \end{pmatrix}.\]
A routine calculation then yields that
\[T_f(m,n) \binom{x}{y} = \binom{u}{v},\]
as desired. 


\newpage

\end{document}